\documentclass{amsart}
\usepackage[utf8]{inputenc}

\usepackage{amsmath,amssymb,amsfonts}%
\usepackage{amsthm}%
\usepackage{latexsym,bm,mathtools}
\usepackage{url}
\usepackage{color}
\usepackage[foot]{amsaddr}
\usepackage{hyperref}

\newcommand{\R}{\mathbb R}
\newcommand{\N}{\mathbb N}
\newcommand{\T}{\mathbb T}
\newcommand{\id}{\mathrm{id}}
\newcommand{\im}{\mathrm{Im}}
\newcommand{\Diffmu}{\mathrm{Diff}_\mu}
\newcommand{\Diff}{\mathrm{Diff}}
\DeclareMathOperator{\grad}{\mathrm{grad}}
\DeclareMathOperator{\diver}{\mathrm{div}}
\DeclareMathOperator{\curl}{\mathrm{curl}}
\DeclareMathOperator{\sgrad}{\mathrm{sgrad}}

\newtheorem{theorem}{Theorem}

\newtheorem{proposition}[theorem]{Proposition}
\theoremstyle{definition}

\theoremstyle{remark}

\newtheorem{example}{Example}

\title{Conjugate points along Kolmogorov flows on the torus}

\author{Alice Le Brigant}
\address{SAMM, University Paris 1 Panthéon-Sorbonne, France}
\email{alice.le-brigant@univ-paris1.fr}

\author{Stephen C. Preston}
\address{Brooklyn College and CUNY Graduate Center, USA}
\email{stephen.preston@brooklyn.cuny.edu}

\begin{document}

\maketitle

\begin{abstract}
The geodesics in the group of volume-preserving diffeomorphisms (volumorphisms) of a manifold $M$, for a Riemannian metric defined by the kinetic energy, can be used to model the movement of ideal fluids in that manifold. The existence of conjugate points along such geodesics reveal that these cease to be infinitesimally length-minimizing between their endpoints. In this work, we focus on the case of the torus $M=\T^2$ and on geodesics corresponding to steady solutions of the Euler equation generated by stream functions $\psi=-\cos(mx)\cos(ny)$ for integers $m$ and $n$, called Kolmogorov flows. We show the existence of conjugate points along these geodesics for all 
pairs of strictly positive integers %
$(m,n)$, 
thereby completing the characterization of all pairs $(m,n)$ such that the associated Kolmogorov flow generates a geodesic with conjugate points.
\end{abstract}


\section{Introduction}

\subsection{Motivation}

Since Arnold in 1966~\cite{arnold2014differential}, the Euler equations for ideal fluids have had  a well-known geometric interpretation as geodesics on the group $\Diffmu(M)$  of volume-preserving diffeomorphisms, or volumorphisms, of a manifold $M$, under a right-invariant Riemannian metric defined by the kinetic energy. Since this approach can be made rigorous as in Ebin-Marsden~\cite{ebin1970groups} and establishes that the geodesic equation is actually a smooth ODE on the group of Sobolev $H^s$ volumorphisms on $M$ for $s>\tfrac{1}{2}\dim(M)+1$, it can be shown that the Riemannian exponential map is $C^{\infty}$, and is invertible near zero. This shows that volumorphisms sufficiently close in $H^s$ can be joined to the identity by a unique minimizing geodesic.
Since then similar geometric interpretations have been found for a variety of other PDEs of continuum mechanics; see Arnold-Khesin~\cite{MR4268535} for a survey and Misio{\l}ek-Preston~\cite{misioek2010fredholm} for an overview of the ODE approach.

On a sufficiently long time interval, a geodesic may cease to minimize the length between its endpoints (or equivalently, the energy for a constant-speed parameterization). When this happens, the endpoint is called a \emph{cut point}, and if in addition the geodesic ceases to be even infinitesimally minimizing between its endpoints, it is called a \emph{conjugate point}. The existence of conjugate points is intimately connected to the existence of positive-curvature sections along a geodesic. On the volumorphism group this existence was unclear since curvature computations have seemed to suggest $\Diffmu(M)$ has mostly negative curvature. The question of whether conjugate 
points exist on $\Diffmu(M)$ was posed already by Arnold~\cite{arnold2014differential} in 1966, but was not solved until Misio{\l}ek~\cite{misiolekstability} in 1993 found them along rotations on the space $\Diffmu(M)$ of volumorphisms of the 2- and 3-spheres $M=S^2$ and $M=S^3$. 

The first example on a flat $M$ was also found by Misio{\l}ek~\cite{misiolekconjugate}, who showed that along the Kolmogorov flow on the torus with stream function $\psi = -\cos{6x}\cos{2y}$, there is eventually a conjugate point. To do so he devised what is now known as the Misio{\l}ek criterion, giving a sufficient condition for existence of a conjugate point. This criterion has been successfully used to find conjugate points along other steady flows on other manifolds, such as the $2D$ ellipsoid~\cite{tauchi2022existence}, the $3D$ ellipsoid~\cite{lichtenfelz2022existence}, and the sphere~\cite{benn2021conjugate}. In three dimensions, conjugate points have a substantially different nature~\cite{ebinmisiolekpreston} and are much more common than in two dimensions, and can be found using a necessary and sufficient local criterion along any particle path~\cite{prestonfirst,preston2008wkb}, as shown by 
the second author. This technique was used in \cite{preston2017geometry} to find conjugate points along axisymmetric $3D$ flows.

It is known that the Misio{\l}ek criterion cannot capture \emph{all} conjugate points. For example Tauchi-Yoneda~\cite{tauchi2022existence} observed that the Misio{\l}ek index never detects conjugate points along the spherical rotation, in spite of the fact that infinitely many of them exist. More generally Tauchi-Yoneda~\cite{tauchi2022arnold} showed that the Misio{\l}ek criterion cannot detect conjugate points along an Arnold stable flow. A condition of the second author~\cite{preston2023conjugate} is more suitable for detecting conjugate points in such cases (and particularly along rotational flows).

Specifically on the torus, the problem of finding 
conjugate points along Kolmogorov flows $\psi = -\cos{mx} \cos{ny}$ for all positive integer pairs $(m,n)$ was posed by Drivas et al.~\cite{drivas2021conjugate}, generalizing from Misio{\l}ek's original example of $m=6$, $n=2$. (We will assume throughout that $m\ge n$ without loss of generality due to symmetry.) They found many additional examples using Misio{\l}ek's criterion, for pairs $(m,n)$ satisfying a condition equivalent to $n\ge 2$ and $m>\frac{3n^2+6}{\sqrt{3}n}$. Soon after the second author~\cite{preston2023conjugate} found via brute force search several more examples, including $(m,1)$ for $m\ge 2$, along with $(2,2)$, $(3,2)$, and $(3,3)$, and conjectured that conjugate points existed for all $(m,n)$ except possibly for $(1,1)$.

\subsection{Contributions}

In this paper we prove this conjecture, that conjugate points exist for \emph{all strictly positive} integer pairs $(m,n)$, 
giving an explicit form of a test function for 
$m>n$, and a slightly different test function for $m=n\ge 2$, 
along with an example due to Drivas in the case $m=n=1$. In the case $n=0$ with $m>0$, it is known~\cite{misiolekstability} that there are no conjugate points. Hence we have obtained a 
complete characterization of all pairs $(m,n)\subset \mathbb{Z}_{\ge 0}^2$ such that the Kolmogorov flow $\psi = -\cos{mx}\cos{ny}$ generates a geodesic in the volume-preserving diffeomorphism group that infinitesimally minimizes length between its endpoints for all time: if $m$ and $n$ are both nonzero, 
the geodesic cannot minimize; if $m$ or $n$ is zero then the geodesic infinitesimally minimizes. 

\subsection{Outline}
In Section \ref{background} we describe the geometric approach to the Euler equation of ideal fluids as originally pioneered by Arnold. We also review Kolmogorov flows, which may be generally defined as those steady solutions of the Euler equation with stream function $\psi$ satisfying $\Delta \psi = -\lambda^2 \psi$, and specifically on the torus of the form $\psi =-\cos{mx}\cos{ny}$. Finally we recall the index form for detecting conjugate points and the Misio{\l}ek criterion which greatly simplifies this computation. In Section \ref{mainresults} we prove Theorem \ref{offdiagonal} (the $m>n$ case), Theorem \ref{diagonal} (the $m=n\geq 2$ case)  
and Theorem \ref{diagonal11thm} (the $m=n=1$ case),
showing the existence of conjugate points.
In Section \ref{minimization} we discuss the relationship between the Misio{\l}ek index and the Rayleigh quotient restricted to a closed subspace. Section \ref{numerics} describes the algorithm we devised to exploit this relationship in order to construct an optimal variation field numerically; those given in Section \ref{mainresults} are simply truncated versions of these that still work. 
In this section we explain why all of the variation test functions we find have the same basic shape: a perturbation of the simplest Laplacian eigenfunction $\cos{x}$ or $\sin{x}$. 
Finally in Section \ref{11sec} we 
suggest some other problems about conjugate points that one can tackle using the same methods as those presented here. In an Appendix we present a simple formula that is useful numerically when writing the Poisson bracket in a Fourier basis, necessary for using standard matrix algorithms to optimize the Misio{\l}ek index.

\subsection{Computations}

Symbolic computations were performed in Maple 2021, while numerical computations were performed using Python. Maple code for computing the index form as in Theorems \ref{offdiagonal}--\ref{diagonal11thm}, and Python code for numerically minimizing the index form and obtaining the form of the candidate minimizers, are both available on github: \url{https://github.com/alebrigant/conjugate-points}.

\subsection{Acknowledgements}

The authors acknowledge support of the Institut Henri Poincar{\'e} (IHP, UAR 839 CNRS-Sorbonne Université), and LabEx CARMIN (ANR-10-LABX-59-01). This work was done while the second author visited the first author at IHP for the Geometry and Statistics in Data Sciences (GESDA) thematic quarter, funded by the French National Center for Scientific Research (CNRS). Both authors thank the IHP for their hospitality. 
We also thank Theo Drivas and Alexander Shnirelman for very helpful discussions and suggestions.

\section{Background}\label{background}

\subsection{Volumorphisms and the Euler-Arnold equation}

Suppose $M$ is a $2$-dimensional manifold equipped with a Riemannian metric $\langle\cdot,\cdot\rangle$ inducing an area form $\mu$. We are interested in 
diffeomorphisms $\varphi:M\rightarrow M$ that preserve the area form: $\varphi^*\mu=\mu$, called volume-preserving diffeomorphisms of $M$ or volumorphisms for short. The space $\Diffmu(M)$ of volumorphisms of $M$ is (formally) a submanifold of the space of diffeomorphisms $\Diff(M)$. The tangent vectors at $\varphi\in\Diffmu(M)$ are right translations $X\circ\varphi$ of divergence-free vector fields $X$ of $M$ tangent to the boundary. We refer the interested reader to \cite{ebin1970groups} for more details on these manifold structures in the context of Sobolev spaces.

A volumorphism $\varphi$ can be seen as describing the positions, at a given time, of the particles of an ideal fluid (incompressible and inviscid) moving inside of $M$: the value $\varphi(x)$ gives the position of the particle that was at position $x\in M$ at $t=0$. The volume-preserving property of $\varphi$ is a consequence of the incompressibility of the fluid. With this interpretation, a tangent vector $X\circ \varphi$ is the velocity field of the fluid at that time, and the kinetic energy of the fluid defines a Riemannian metric on $\Diffmu(M)$
\begin{equation}\label{kinetic_metric}
g_\varphi(X\circ\varphi, Y\circ\varphi)=\int_M\langle X\circ\varphi,Y\circ\varphi\rangle\mu.
\end{equation}
Since for any volume-preserving $\varphi\in\Diffmu(M)$, the pullback measure is $\varphi^*\mu=\mu$, we see that
$$g_\varphi(X\circ\varphi, Y\circ\varphi)=\int_M \langle X,Y\rangle\mu=g_{\id}(X, Y),$$
and so the kinetic metric is right-invariant on $\Diffmu(M)$. 

The result shown by Arnold \cite{arnold2014differential} is the following: the geodesics in $\Diffmu(M)$ for the kinetic metric \eqref{kinetic_metric} describe the motion of an ideal fluid in $M$. To see this, we can consider the geodesic equation on the larger space $\Diff(M)$, which is simply $\partial^2\gamma/\partial t^2=0$, and then orthogonally project it on the submanifold $\Diffmu(M)$. In terms of the velocity field $X(t,\cdot)$ associated with the geodesic $\gamma(t,\cdot)$ 
$$\frac{\partial \gamma}{\partial t}(t,x)=X(t,\gamma(t,x)),$$ 
this gives
$$P\left(\frac{\partial^2\gamma}{\partial t^2}\right)=P\left(\frac{\partial X}{\partial t}+\nabla_XX\right)\circ\gamma=0,$$
where $\nabla$ is the Levi-Civita connection associated to the Riemannian metric on $M$. The orthogonal projection $P$ on the space of divergence-free vector fields tangent to the boundary is obtained by the Hodge decomposition of vector fields
\begin{equation}\label{proj}
P(X)=X-\grad f,
\end{equation}
where $f$ is a function verifying $\Delta f=\diver X$ and whose normal component along the boundary equals that of the vector field $X$. The geodesic equation for the kinetic metric \eqref{kinetic_metric} on $\Diffmu(M)$ is therefore given by
\begin{equation}\label{euler-arnold}
    \frac{\partial X}{\partial t}+\nabla_XX=-\grad p,
\end{equation}
where $p$ is called the pressure function, defined up to a constant by $\Delta p=-\diver(\nabla_XX)$ and its component along the normal $\nu$: $\langle \grad p,\nu\rangle=-\langle X, \nu\rangle$. Equation \eqref{euler-arnold} is the Euler equation for incompressible fluids, also called the Euler-Arnold equation in virtue of its interpretation by Arnold as a geodesic equation on the space of volumorphisms.

\subsection{Kolmogorov flows on the torus}

In the two-dimensional case, it can be shown that the Euler-Arnold equation \eqref{euler-arnold} can be rewritten in terms of the curl as
$$\frac{\partial}{\partial t}(\curl X)+X(\curl X)=0.$$
Now divergence-free vector fields on surfaces can be written as $X=\sgrad\psi + W$ where $\psi:M\rightarrow\R$ is defined by $\Delta\psi = \curl X$ and $\left.\psi\right|_{\partial M}=0$, while the vector field $W$ satisfies $\curl W=0$ and $\diver W = 0$. For such a vector field $X$, the Euler-Arnold equation becomes
$$\frac{\partial}{\partial t}\Delta \psi+\{\psi,\Delta\psi\}+W(\Delta\psi)=0.$$
Here $\{\cdot,\cdot\}$ is the Poisson bracket, which can be defined by the following property of Hamiltonian vector fields: if $\sgrad f$ and $\sgrad g$ are Hamiltonian vector fields generated by mean-zero functions $f,g:M\rightarrow\R$, then $\{f,g\}$ is the unique mean-zero function generating their Lie bracket
$$[\sgrad f,\sgrad g]=\sgrad\{f,g\}.$$
In dimension 2, the Poisson bracket is simply given by 
$$\{f,g\}\mu= df\wedge dg,$$
which reduces to $\{f,g\}=\partial_xf\partial_yg-\partial_yf\partial_xg$ on the torus. We then see that any 
function $\psi$ for which 
\begin{equation}\label{steadystream}
\{ \psi, \Delta \psi\} = 0,
\end{equation}
generates a steady solution $X = \sgrad \psi$ of the Euler-Arnold equation, i.e., a geodesic $\gamma$ with associated velocity field $X$ satisfying $\frac{\partial X}{\partial t}=0$. The function $\psi$ is called the \emph{stream function}. Here we will focus on geodesics on $\Diffmu(\T^2)$ whose velocity field is generated by the stream functions
\begin{equation}\label{stream-fctn}
    \psi(x,y)=-\cos(mx)\cos(ny), \quad (m,n)\in\N^2.
\end{equation}
These are eigenfunctions of the Laplacian for the eigenvalues $-\lambda^2=-(m^2+n^2)$, and they satisfy \eqref{steadystream} to give steady solutions of the Euler equations, the so-called Kolmogorov flows on the torus $M=\T^2$. For these flows, the fluid has a circular movement inside rectangular grid cells, as shown in Figure~\ref{fig:streamlines}.
The question that we ask is: can we find conjugate points along any Kolmogorov flow on the torus? The existence of conjugate points is related to the question of the uniqueness of the geodesic between its endpoints, and whether a perturbation of the initial condition of the corresponding flow can lead to the same result as no perturbation at all. 

\begin{figure}
    \centering
    \includegraphics[width=0.95\linewidth]{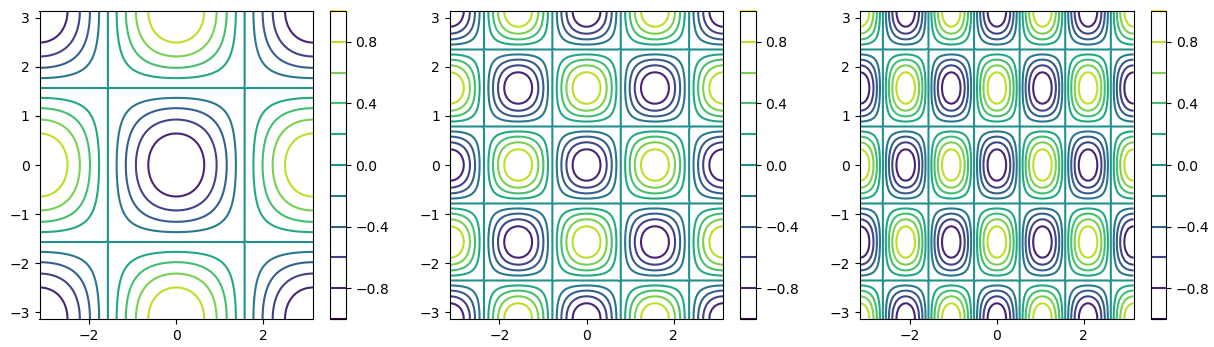}
    \caption{Kolmogorov flows on the torus: level sets of the stream function $\psi(x,y)=-\cos(mx)\cos(ny)$ for $m=n=1$ (left), $m=n=2$ (middle), $m=3$, $n=2$ (right). The velocity of the fluid is tangent to these level sets.}
    \label{fig:streamlines}
\end{figure}

\subsection{Conjugate points and the Misio{\l}ek criterion}

Two volumorphisms $\varphi_1$ and $\varphi_2$ are conjugate if there exists a family of geodesics that start at $\varphi_1$ and end at $\varphi_2$ up to first order; 
more precisely, if there exists $\gamma(s, t)$ with $(s,t)\in(-\epsilon,\epsilon)\times[0,1]$ such that for all $s$, $\gamma(s,\cdot)$ is a geodesic in $\Diffmu(M)$, $\gamma(s,0)=\varphi_1$, $\gamma(s,1)=\varphi_2$, 
and the corresponding Jacobi field $J(t)=\frac{\partial\gamma}{\partial s}(0,t)$ satisfies $J(0)=J(1)=0$.
We say that there is a conjugate point along a geodesic $\gamma$ if there exists a time $T>0$ such that $\gamma(0)$ and $\gamma(T)$ are conjugate points. Conjugate points are particular cases of cut points. However, they can only happen in manifolds with some positive curvature, while cut points can occur even in flat manifolds due to non trivial topology. 

Conjugate points can be detected using the so-called index form, defined for any vector field $Y(t)$ along the geodesic $\gamma(t)$ by
$$I(Y,Y)=\int_0^{T}\left\langle\frac{DY}{dt}, \frac{DY}{dt}\right\rangle - \left\langle R(Y,\dot\gamma)\dot\gamma,Y\right\rangle \,dt.$$
It can be shown~\cite{misiolekstability} that $I(Y,Y) = 0$ for some $Y$ satisfying $Y(0) = Y(T) = 0$ if and only if $Y$ is a Jacobi field. Furthermore if $I(Y,Y) < 0$ for some such field $Y$, then there is a Jacobi field $J$ along $\gamma$ vanishing at $t = 0$ and for $t = \tau$ for some $\tau <T$. Establishing negativity of the index form for some vector field $Y$ is thus an effective way to show there must be a conjugate point without actually having to find it. In our setting of volume-preserving diffeomorphisms on a surface, the index form can be written in the following way.
\begin{proposition}[Corollary 9 in \cite{preston2023conjugate}]
Suppose $M$ is a surface
and that $X = \sgrad \psi$ solves the Euler equation \eqref{euler-arnold} on $[0,T]$. For a time-dependent family of functions $g(t)$ on $M$, vanishing on the boundary of $M$ and at $t=0$ and $t=T$, the index form for $Y =\sgrad g$ along the geodesic with velocity field $X$ becomes
\begin{equation}\label{index-form}
I(Y,Y)=\int_0^{T}\int_M |\grad h|^2 + \Delta \psi \{g,h\} \,d\mu dt, \quad
h:=\frac{\partial g}{\partial t} + \{\psi,g\}.
\end{equation}
\end{proposition}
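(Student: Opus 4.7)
The plan is to derive the formula by computing the second variation of the energy $E(\sigma) = \tfrac{1}{2}\int_0^T\|\partial_t\sigma\|^2\,dt$ in Eulerian coordinates. Because $\gamma$ is a geodesic and all curves $\gamma(s,\cdot)$ share its endpoints, the standard identity $I(Y,Y) = \partial_s^2 E(\gamma(s,\cdot))|_{s=0}$ holds without correction (the usual boundary term $\langle D_s\partial_s\gamma,\partial_t\gamma\rangle|_{t=0}^{t=T}$ vanishes because $\partial_s^k\gamma(s,0) = \partial_s^k\gamma(s,T) = 0$ for all $k \ge 1$). The task then reduces to expanding the energy in $s$ to second order, which is easier in Eulerian coordinates because the first-order velocity perturbation has a clean closed form.

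Set $X_s(t) = \partial_t\gamma(s,t)\circ\gamma(s,t)^{-1}$ and $W_s(t) = \partial_s\gamma(s,t)\circ\gamma(s,t)^{-1}$, both divergence-free and tangent to $\partial M$, with $W_0 = \sgrad g$ and $X_0 = \sgrad\psi$. The commutation of mixed partial derivatives of $\gamma(s,t)$ translates in Eulerian form to
$$\partial_s X_s = \partial_t W_s + [X_s, W_s].$$
At $s=0$, using $[\sgrad\psi,\sgrad g] = \sgrad\{\psi,g\}$, the first-order velocity perturbation is $X_1 := \partial_s X_s|_{s=0} = \sgrad h$ with $h = \partial_t g + \{\psi, g\}$. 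Differentiating once more gives $X_2 := \partial_s^2 X_s|_{s=0} = \partial_t W_1 + [X_1, W_0] + [X_0, W_1]$, and the second-order expansion of $E$ reads
$$\partial_s^2 E|_{s=0} = \int_0^T\!\!\int_M\bigl(|X_1|^2 + \langle X_0, X_2\rangle\bigr)\,d\mu\,dt.$$
Because $\sgrad h$ is the pointwise $90^\circ$ rotation of $\grad h$, one has $|X_1|^2 = |\grad h|^2$, yielding the first term of the stated formula.

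It then remains to check that the $W_1$-dependent contributions in $\langle X_0, X_2\rangle$ cancel. Integration by parts in $t$ turns $\int_0^T\int_M\langle X_0,\partial_t W_1\rangle\,d\mu\,dt$ into $\int_M\langle X_0,W_1\rangle\,d\mu\,\big|_0^T$ (using $\partial_t X_0 = 0$), which vanishes since $W_1(0) = W_1(T) = 0$ from the fixed-endpoint condition; and $\langle X_0,[X_0,W_1]\rangle = -\langle \mathrm{ad}_{X_0}^*X_0, W_1\rangle$ vanishes by the Arnold form $\partial_t X_0 + \mathrm{ad}_{X_0}^* X_0 = 0$ of the Euler equation for a steady solution. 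The surviving contribution is $\langle X_0, [X_1,W_0]\rangle = \langle \sgrad\psi,\sgrad\{h,g\}\rangle = \int_M\grad\psi\cdot\grad\{h,g\}\,d\mu$, which by integration by parts and the antisymmetry $\{h,g\} = -\{g,h\}$ equals $\int_M\Delta\psi\,\{g,h\}\,d\mu$. Assembling the pieces yields the proposition. The main obstacle is not the algebra itself but ensuring the Sobolev-level setup is valid: one must exhibit a smooth enough variation $\gamma(s,\cdot)$ in $\Diffmu(M)$ realizing the prescribed first-order field with $W_s$ divergence-free for all $s$, and the integration-by-parts identities must be justified in the presence of boundary (using $g|_{\partial M}=0$). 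The essential algebraic input is the vanishing $\mathrm{ad}_{X_0}^* X_0 = 0$ for a steady Euler solution; without steadiness a residual $W_1$-dependent term would survive, preventing closure of the formula on $g$ alone, and providing a useful consistency check that the expression depends only on the first-order data $g$.
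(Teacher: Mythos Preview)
The paper does not give its own proof of this proposition; it is quoted verbatim as Corollary~9 of \cite{preston2023conjugate}. So there is nothing to compare against, and the question is simply whether your argument stands on its own.

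Your second-variation approach is correct in outline and the algebra leading to the two displayed terms $\lvert\grad h\rvert^2$ and $\Delta\psi\,\{g,h\}$ is fine. The one genuine gap is your treatment of the $W_1$-terms. You dispose of $\int\langle X_0,\partial_t W_1\rangle$ and $\int\langle X_0,[X_0,W_1]\rangle$ \emph{separately}, invoking $\partial_t X_0=0$ for the first and $\mathrm{ad}_{X_0}^*X_0=0$ for the second, and you then assert that ``without steadiness a residual $W_1$-dependent term would survive.'' This is not right, and it leaves the proposition unproved as stated, since the hypothesis is only that $X=\sgrad\psi$ solves Euler on $[0,T]$, not that it is steady. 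In fact the two terms cancel \emph{together} for any Euler solution: after integrating by parts in $t$ and using $W_1(0)=W_1(T)=0$,
\[
\int_0^T\!\!\int_M\langle X_0,\partial_t W_1\rangle + \langle X_0,[X_0,W_1]\rangle\,d\mu\,dt
= -\int_0^T\!\!\int_M \big\langle \partial_t X_0 + P(\nabla_{X_0}X_0),\,W_1\big\rangle\,d\mu\,dt = 0,
\]
by the Euler equation \eqref{euler-arnold} itself (use $\langle X_0,[X_0,W_1]\rangle = -\langle P(\nabla_{X_0}X_0),W_1\rangle$ after the usual divergence-free integrations by parts). Equivalently in stream functions, $-\int \partial_t\Delta\psi\cdot w_1 + \int w_1\{\psi,\Delta\psi\}=0$ by the vorticity form $\partial_t\Delta\psi+\{\psi,\Delta\psi\}=0$. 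So the closure on first-order data $g$ is exactly the content of the Euler equation, not of steadiness; your ``consistency check'' points the wrong way. Once you make this correction, the argument proves the proposition in the generality stated. For the paper's purposes (Kolmogorov flows are steady) your restricted version already suffices, but the stronger claim you make about the non-steady case is false and should be removed.
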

 The Misio{\l}ek criterion consists of computing the index form for a particular family of deformations, namely $g(t)=\sin(\frac{\pi t}{T})f$ for some function $f:M\rightarrow\R$, and then letting time $T$ go to infinity. If the result is negative for some function $f$, then for sufficiently large $T$ the index form will be negative and hence there will be a conjugate point occurring at some time $\tau <T$. 


Consider a steady geodesic in $\Diffmu(M)$ generated by a stream function $\psi:M\rightarrow\R$ satisfying 
$\Delta \psi =\Upsilon\circ\psi$ for some function $\Upsilon\colon \mathbb{R}\to\mathbb{R}$; every steady solution of \eqref{steadystream} can at least locally be expressed in this form.
The index form \eqref{index-form} at a vector field $Y=\sgrad g$ generated by a family of deformations $g(t)=\sin(\frac{\pi t}{T})f$ for a given function $f$ can be written in terms of the Poisson bracket $\phi = \{\psi, f\}$ as
$$I(g)=\frac{\pi^2}{2T}\int_M |\grad f|^2 d\mu + \frac{T}{2}\int_M \Big(|\grad\phi|^2  
+ \Upsilon'(\psi) \,\phi^2\Big) d\mu.$$
The Misio{\l}ek criterion states that there is a conjugate point eventually occurring along the geodesic if 
$$\lim_{T\to\infty} \frac{2}{T} I(g)<0$$
for some function $f$, i.e.,
$$
MI(\phi) := \int_M \Big(\lvert \grad \phi\rvert^2 
+\Upsilon'(\psi)\,
\phi^2 \Big) \, d\mu < 0, \qquad
\phi=\{\psi,f\}.
$$
In the present case, for $M=\mathbb{T}^2$, we will take 
\begin{equation}\label{psilambda}
\psi=-\cos{mx}\cos{ny}, \qquad \text{with}\quad 
\Upsilon(\psi) = -\lambda^2 \psi, \quad\text{where } 
\lambda^2 = m^2+n^2.
\end{equation}
This leads to the explicit version of the Misio{\l}ek index in the form 
\begin{equation}\label{misiolek_index}
MI(\phi) := \int_M \Big(\lvert \grad \phi\rvert^2 
- (m^2+n^2) \phi^2 \Big) \, d\mu < 0, \qquad
\phi=\{\psi,f\},
\end{equation}
which is the version we will use in the remainder of the paper. We will show that when $\phi=-\cos{mx}\cos{ny}$ for positive integers $m,n$, there is always a function $f$ to make $MI(\phi)<0$, leading to both existence of conjugate points and the indefiniteness of Arnold's quadratic form.

\subsection{Remark on the link with stabillity}
Note that the Misio{\l}ek index is precisely the second variation of the Dirichlet energy $\int_M \lvert \grad \psi\rvert^2 \, d\mu$, which is the $L^2$ energy of a vector field, when considered under the action by volume-preserving diffeomorphisms of a $2D$ manifold. Here variations are generated by test functions $f$, while the actually varied stream function under the action is given by $\phi=\{\psi,f\}$. The formula for the second variation of the energy is given in Chapter II, Remark 2.4 of Arnold-Khesin~\cite{MR4268535}, and it agrees with the Misio{\l}ek index. Hence the second variation of energy of this minimization problem is indefinite if and only if there is a conjugate point that is detectable by the Misio{\l}ek criterion. As pointed out by Tauchi-Yoneda~\cite{tauchi2022arnold}, this implies that conjugate points along an Arnold-stable flow cannot be detected by the Misio{\l}ek criterion, although they may exist (as happens for many rotational flows, such as on the sphere; see \cite{preston2023conjugate} for examples).

Indefiniteness of the second variation does not necessarily imply instability of the steady flow, however, and thus existence of conjugate points detectable by the Misio{\l}ek criterion does not necessarily imply that the linearized Euler equation has unstable growing solutions. See Arnold-Khesin, Chapter II, Remark 3.7~\cite{MR4268535}. It would be very interesting to further clarify the apparent connection between \emph{instability} of the linearized Euler equation and the existence of conjugate points along the corresponding geodesic, particularly since the Arnold criterion for stability and the Misio{\l}ek criterion for conjugate points are both only sufficient but not necessary, and since the more intuitive connection would be between conjugate points and \emph{stability}.

Note that the second variation formula (and thus the Misio{\l}ek criterion) apply to finding infinitesimal energy-decreasing variations of the energy by diffeomorphisms, in terms of the local analysis of vector fields at the identity. The problem of finding the global minimizer was studied by Shnirelman and Lan~\cite{Lan2012TheMO}, who numerically found a minimizer that is not smooth in the case $(m,n)=(1,1)$. It is possible that the roughness of this optimizer is related to some difficulty in the second variation we are studying here, but the connection is not completely clear.

\section{Main results}\label{mainresults}


We will show that for every pair $(m,n)$ of positive integers with $m\ge n$, the Misio{\l}ek criterion \eqref{misiolek_index} detects a conjugate point along the geodesic in $\Diffmu(\mathbb{T}^2)$.
It is easier to handle the three cases $m>n$,  $m=n\ge 2$, and $m=n=1$ separately.

The formulas presented in the next three theorems were discovered by numerics we will elaborate on in Section \ref{numerics}. They are finite truncations of infinite Fourier series, dominated by the lowest-order term (either $\cos{x}$ or $\sin{x}$), which show up because they are the eigenfunctions of the Laplacian with smallest eigenvalue $-1$.

\begin{theorem}\label{offdiagonal}
If $m$ and $n$ are positive integers with $m>n$, and $\psi(x,y)=-\cos{mx}\cos{ny}$, then there is a function $f\colon \mathbb{T}^2\to\mathbb{R}$ such that $\phi=\{\psi,f\}$ satisfies the Misio{\l}ek criterion \eqref{misiolek_index}. Hence if the geodesic $\gamma$ in $\Diffmu(\mathbb{T}^2)$ has initial conditions $\gamma(0)=\id$ and $\gamma'(0)=\sgrad \psi$, then $\gamma(T)$ is conjugate to $\gamma(0)$ for some $T>0$, and $\gamma$ is not minimizing beyond $T$.
\end{theorem}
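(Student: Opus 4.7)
My proposal is to work in the Fourier basis on $\mathbb{T}^2$, where the Misio\l ek index \eqref{misiolek_index} is diagonal: a single Fourier mode of $\phi$ with wavenumber $(a,b)$ contributes $(a^2+b^2)-(m^2+n^2)$ per unit $L^2$-mass, so modes with $a^2+b^2<m^2+n^2$ push $MI(\phi)$ negative, the most profitable being $(\pm 1,0)$ (weight $1-(m^2+n^2)$). The strategy is therefore to build $f$ as a finite Fourier polynomial so that $\phi=\{\psi,f\}$ concentrates its mass on such low frequencies while keeping the high-frequency tail small.

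The basic mechanism is that for $\psi=-\cos(mx)\cos(ny)$, the Poisson bracket $\{\psi,\,\cdot\,\}$ shifts the wavenumber of a Fourier monomial in $f$ by $(\pm m,\pm n)$ via the product-to-sum identities $2\sin A\sin B=\cos(A-B)-\cos(A+B)$ and $2\cos A\cos B=\cos(A-B)+\cos(A+B)$. Consequently, the only monomials in $f$ that can contribute to the low mode $(1,0)$ in $\phi$ are those supported at $(m\pm 1,\pm n)$. For a minimal such choice like $f_0=\sin((m-1)x)\cos(ny)$ one computes
\[
\{\psi,f_0\}=-\tfrac{n}{4}\sin(2ny)\bigl[(2m-1)\cos x-\cos((2m-1)x)\bigr],
\]
which throws the large coefficient $2m-1$ onto the low mode $(1,\pm 2n)$ at the cost of only amplitude $1$ at the high mode $(2m-1,\pm 2n)$. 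This block alone is enough to make $MI$ negative when $m^2$ sufficiently exceeds $3n^2$, recovering the range already treated in \cite{drivas2021conjugate}, but it is insufficient near the diagonal, in particular for $m=n+1$.

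To reach every pair $m>n\ge 1$ I would take an ansatz $f=\sum_k\alpha_k f_k$ where $f_k$ runs over an explicit finite set of primitive monomials naturally suggested by the same recipe (for instance $\sin((m\pm 1)x)\cos(ny)$, $\cos((m\pm 1)x)\sin(ny)$, $\cos(mx)\sin((n\pm 1)y)$, and a few more at distance two or three from the support of $\psi$), with coefficients $\alpha_k$ given as explicit functions of $m,n$ supplied by the numerical search described in Section \ref{numerics}. Each $\{\psi,f_k\}$ expands into a finite sum of Fourier monomials with computable coefficients; substituting into \eqref{misiolek_index} and using orthogonality of Fourier modes then collapses $MI(\phi)$ into an explicit closed-form polynomial $R(m,n)$.

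The principal obstacle is to verify that $R(m,n)<0$ for every pair of positive integers with $m>n\ge 1$. I expect this to split into a generic regime, in which the leading-order asymptotics of $R(m,n)$ are manifestly negative, and a short list of small borderline pairs near the diagonal (such as $(2,1),(3,2),(4,3),\dots$) to be checked by direct substitution. The truncation needs to be long enough for the generic bound to close uniformly in $n$ yet short enough to remain humanly readable; calibrating this trade-off is the essential difficulty and is exactly the point of the numerical construction in Section \ref{numerics}. Once the ansatz is fixed, all remaining steps are routine Fourier bookkeeping, and can be handled by the symbolic computation referenced in the introduction.
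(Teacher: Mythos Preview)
Your proposal correctly identifies the mechanism and the overall strategy---a finite Fourier ansatz for $f$, optimize the coefficients, verify negativity of the resulting polynomial in $(m,n)$---but it stops short of a proof: you never specify an ansatz that works for all $m>n$, deferring instead to the numerical search of Section~\ref{numerics} and anticipating a residual list of ``borderline pairs near the diagonal'' to be checked individually. There is also a small internal inconsistency: you motivate $f_0=\sin((m-1)x)\cos(ny)$ by aiming for the mode $(1,0)$ in $\phi$, but since $f_0$ shares the factor $\cos(ny)$ with $\psi$, your $\{\psi,f_0\}$ is in fact supported at $(1,\pm 2n)$ and $(2m{-}1,\pm 2n)$ with no $(1,0)$ component, as your own displayed formula confirms.

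The paper follows the same high-level plan but with a concrete and surprisingly economical ansatz,
\[
f(x,y)=\cos x\,\bigl(1+a\cos(2mx)+b\cos(2ny)\bigr),
\]
placing the dominant monomial $\cos x$ at wavenumber $(1,0)$ in $f$ rather than in $\phi$; this is a different organizing principle from yours, where $f$ lives near $(m,n)$. Two free parameters suffice: one computes $MI=\tfrac{\pi^2 n^2}{4}\,H(a,b,m,n)$ with $H$ quadratic in $(a,b)$, and at the explicit choices $a_0=-(4m^2{+}1)/(16m^4{+}24m^2{+}1)$, $b_0=1/(4n^2{+}1)$ the sign reduces to a single polynomial inequality $J(m,n)<0$. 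This is then proved \emph{uniformly} for all $m>n$ by monotonicity in $n$ followed by the substitution $n=m-1$, $m=k+1$. No case-by-case checking is needed, which is precisely the closure your outline could not guarantee.
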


\begin{proof}
See Maple file in \url{https://github.com/alebrigant/conjugate-points} for details of the computations here.

For numbers $a,b\in\mathbb{R}$, define 
\begin{equation}\label{foffdiag}
f(x,y) = \cos{x}\big(1+a\cos{(2mx)}+ b\cos{(2ny)}\big).
\end{equation}
We compute the Poisson bracket
\begin{align*}
\phi(x,y) &= \psi_x(x,y)f_y(x,y)-\psi_y(x,y)f_x(x,y) \\
&= 2mn\cos x\big( a\cos(mx)\sin(ny)\sin(2mx) - b \sin(mx)\cos(ny)\sin(2ny) \big) \\
&+ n\cos(mx)\sin(ny) \sin{x}\big(1+a\cos(2mx)+ b\cos(2ny)\big). 
\end{align*}

The quantity $MI$ from \eqref{misiolek_index} is then given by 
$$MI = \frac{\pi^2n^2}4 H(a,b,m,n),$$
where
\begin{multline}\label{Hdef}
H(a,b,m,n)=16 a^2 m^4+16 b^2 m^2 n^2+24 a^2 m^2-12 a b m^2+4 b^2 m^2+4 b^2 n^2 \\
+8 a m^2-8 b m^2+a^2-a b+b^2+2 a-2 b+2. 
\end{multline}
This is quadratic in both $a$ and $b$, with positive leading-order coefficients, and thus the critical point must be a global minimizer. However the formula is slightly complicated, and it is easier to use the points 
\begin{equation}\label{aboffdiag}
a_0=-\frac{4m^2+1}{16m^4+24m^2+1} \qquad \text{and}\qquad b_0=\frac{1}{4n^2+1};
\end{equation}
here $a_0$ is the minimizer of $H(a,0,m,n)$ and $b_0$ is the minimizer of $H(0,b,m,n)$. With these choices we obtain
\begin{equation}\label{HJdefoffdiag}
\begin{split}
H(a_0,b_0,m,n) &= \frac{J(m,n)}{(16m^4+24m^2+1)(4n^2+1)}, \text{  where }\\
J(m,n) &= 4 n^2 (16 m^4 + 40 m^2 + 1)  - 64m^6 - 48m^4 + 28m^2 + 1.
\end{split}
\end{equation}
We want to show that $J(m,n)<0$ for every choice of naturals $m$ and $n$ with $m>n$. Clearly 
$J$ is increasing as a function of $n$, so we have 
$$ J(m,n) \le J(m,m-1) = -128 m^5 + 176 m^4 - 320 m^3 + 192 m^2 - 8 m + 5 \quad \text{for $m\ge 2$}.$$
Writing $m=k+1$ for $k\ge 1$ this becomes 
$$ J(m,n) \le J(k+1,k) = -128 k^5 - 464 k^4 - 896 k^3 - 992 k^2 - 520 k - 83,$$
which is obviously negative. 


We conclude that $J(m,n)<0$ for all positive integers $m,n$ with $m>n$. This ensures that $H(a_0,b_0,m,n)<0$, and thus that $MI<0$ for these choices of $a_0$ and $b_0$. By the Misio{\l}ek criterion, there is eventually a conjugate point along the corresponding geodesic.
\end{proof}

Observe in the proof above that the worst-case scenario is when $n=m-1$: when $n$ is smaller than $m-1$ the index form is even more negative. This corresponds to conjugate points being easier to find when $m$ and $n$ are farther apart, as was found by Drivas et al.~\cite{drivas2021conjugate}. It thus stands to reason that the hardest case is when $n=m$. Indeed we need two extra terms in our formula for the variation to make it work in this case. Note also that we require that $n\ge 2$; this variation field does not work if $m=n=1$. We will handle that case separately.

\begin{theorem}\label{diagonal}
Suppose $n\ge 2$ and $\psi(x,y) = -\cos{(nx)}\cos{(ny)}$. Then there is a function $f\colon \mathbb{T}^2\to\mathbb{R}$ such that $\phi=\{\psi,f\}$ satisfies \eqref{misiolek_index}. Thus there is a time $T>0$ such that $\gamma(T)$ is conjugate to $\gamma(0)$ along the corresponding geodesic $\gamma$, and $\gamma$ is not minimizing past $T$. 
\end{theorem}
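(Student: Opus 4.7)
The plan is to adapt the proof of Theorem \ref{offdiagonal} to the diagonal case $m=n$. The strategy is unchanged: pick a trigonometric test function $f$ of the shape $\cos x$ times a finite Fourier polynomial with undetermined coefficients, compute the Poisson bracket $\phi=\{\psi,f\}$ and the associated Misio{\l}ek index $MI(\phi)$ from \eqref{misiolek_index} (now with $\lambda^2=2n^2$), obtain a convex quadratic form in the coefficients, and exhibit specific values that make this form negative for every $n\ge 2$.

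The two-parameter ansatz \eqref{foffdiag} fails when $m=n$ because the frequencies $2mx$ and $2ny$ coincide, so the two correction terms interact resonantly inside $\phi$ rather than contributing independently. Guided by the hint after Theorem \ref{offdiagonal} and by the numerical experiments described in Section \ref{numerics}, I would enlarge the ansatz by two extra terms with the same $\cos x$ prefactor, for instance
$$f(x,y)=\cos{x}\Big(1+a\cos(2nx)+b\cos(2ny)+c\cos(2nx)\cos(2ny)+d\sin(2nx)\sin(2ny)\Big),$$
chosen so that each new Fourier mode produced by $\{\psi,f\}$ can be independently tuned. Expanding $\phi$ and integrating yields a quadratic expression $H(a,b,c,d,n)$ whose quadratic part is positive definite; either its exact critical point or, in the spirit of the choice \eqref{aboffdiag}, substituting one-variable minimizers of convenient restrictions of $H$ produces a closed-form value of the shape $J(n)/P(n)$ with $P(n)>0$.

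It then remains to verify $J(n)<0$ for every integer $n\ge 2$. Since $J$ is a polynomial of low degree in $n$, this reduces to checking the sign of its leading coefficient, evaluating $J(2)$ explicitly, and controlling intermediate values by an elementary monotonicity argument, exactly as is done with $J(m,n)$ in \eqref{HJdefoffdiag}. The main obstacle is computational rather than conceptual: at $m=n$ many more Fourier modes in $f$ contribute to the same modes of $\phi$, so the matrix of $H$ has substantial off-diagonal coupling and the algebra becomes unpleasant enough to warrant symbolic computation, as in the authors' companion Maple file. The delicate point to watch for is that the case $n=1$ lies outside the reach of this enriched ansatz, which is precisely why it must be handled separately by Theorem \ref{diagonal11thm}.
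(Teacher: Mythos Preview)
Your overall strategy is exactly the one the paper uses: enlarge the two-term ansatz of Theorem~\ref{offdiagonal} by two further Fourier modes, compute the resulting quadratic form $H(a,b,c,d,n)$, locate its minimum, and verify the minimum is negative for every integer $n\ge 2$. You also correctly anticipate that the case $n=1$ escapes this ansatz.

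The gap is in the specific enlargement you propose. The paper does \emph{not} add the mixed products $\cos x\cos(2nx)\cos(2ny)$ and $\cos x\sin(2nx)\sin(2ny)$; instead it takes
\[
f(x,y)=\cos x\big(1+a\cos(2ny)+b\cos(4ny)+c\cos(2nx)\big)+d\,\sin x\,\sin(2nx),
\]
i.e.\ a second $y$-harmonic $\cos(4ny)$ and a term $\sin x\sin(2nx)$ that is \emph{not} of the form $\cos x\cdot(\text{correction})$. These particular modes were selected by the numerical eigenvector search of Section~\ref{numerics}, and the paper remarks after \eqref{fdiag22} that none of the four correction terms can be dropped without losing the sign. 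Your proposed modes lie in a genuinely different four-dimensional subspace (for instance, you never reach the $\cos(4ny)$ frequency, and your insistence on a global $\cos x$ prefactor excludes the independent control over $\cos((2n\pm1)x)$ that the paper's $c$ and $d$ together provide). Since you do not actually compute $H$ for your ansatz, there is no evidence your subspace contains a point with $MI<0$ for all $n\ge2$; given how the paper's choice was dictated by numerics rather than structural reasoning, this is a real risk, not a formality. In short: right plan, but the ``for instance'' is doing all the work, and the instance you chose is neither the paper's nor verified.
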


\begin{proof}
See Maple file in \url{https://github.com/alebrigant/conjugate-points} for details of the computations here.

For coefficients $a,b,c,d\in\mathbb{R}$, define 
\begin{equation}\label{fdiag}
f(x,y) = \cos{x} \big(1+a \cos{(2ny)}+b\cos{(4ny)}+c\cos{(2nx)}\big)+d\sin{x} \sin{(2nx)}.
\end{equation}
We again compute 
\begin{multline}
\phi(x,y) 
=n \sin{(n x)} \cos{(n y)} \cos{x} \Big(-2 a n \sin{(2 n y)}-4 b n \sin{(4 n y)}\Big) \\
- n \cos{(n x)} \sin{(n y)} \Big(-\sin{x} \big[(1+a \cos{(2 n y)}+b \cos{(4 n y)}+c \cos{(2 n x)} \big]\\
-2cn \cos{x} \sin{(2 n x)}+d \cos{x} \sin{(2 n x)}+2 dn \sin{x} \cos{(2 n x)}\Big)
\end{multline}
Computing the Misio{\l}ek index \eqref{misiolek_index} now gives 
$$ MI = \frac{\pi^2 n^2}4 H(a,b,c,d,n),$$
where 
\begin{multline}\label{Hdiag}
H(a,b,c,d,n) = 16a^2n^4 + 64abn^4 + 256b^2n^4 + 16c^2n^4 + 16d^2n^4 + 8adn^3 \\ - 64cdn^3
+ 8a^2n^2 - 4abn^2 - 12acn^2 + 32b^2n^2 + 24c^2n^2 + 24d^2n^2 + 6adn - 8an^2\\ - 16cdn + 8cn^2
+ a^2 - ab - ac + b^2 + c^2 + d^2 - 8dn - 2a + 2c + 2.
\end{multline}
Again, this is quadratic in the four unknown coefficients $a,b,c,d$ with positive leading-order coefficients, so the unique critical point is a global minimum. 

In this case there is no simpler formula which works, so we find the critical point by solving the linear system 
$$ \frac{\partial H}{\partial a} = \frac{\partial H}{\partial b} = \frac{\partial H}{\partial c} =\frac{\partial H}{\partial d} = 0.$$
Explicitly this system looks like 
\begin{align*}
(32a + 64b)n^4 + 8dn^3 + (16a - 4b - 12c - 8)n^2 + 6dn + 2a - b - c - 2 &= 0, \\
(64a + 512b)n^4 + (-4a + 64b)n^2 - a + 2b &= 0, \\
32cn^4 - 64dn^3 + (-12a + 48c + 8)n^2 - 16dn - a + 2c + 2 &= 0, \\
32dn^4 + (8a - 64c)n^3 + 48dn^2 + (6a - 16c - 8)n + 2d &= 0.
\end{align*}

There is a unique solution of the system above, given by 
\begin{align*}
a_0 &= \frac{(8 n^2+1) (256 n^4+32 n^2+1)}{6144 n^8+4864 n^6+920 n^4+58 n^2+1}, \\
b_0 &= -\frac{512 n^6+32 n^4-12 n^2-1}{2 (6144 n^8+4864 n^6+920 n^4+58 n^2+1)}, \\
c_0 &= -\frac{49152 n^{10}+59392 n^{8}+17088 n^6+1952 n^4+88 n^2+1}{2 (98304 n^{12}+28672 n^{10}-18048 n^8-1568 n^6+472 n^4+50 n^2+1)}, \\
d_0 &= -\frac{n (32768 n^{8}+19456 n^6+3328 n^4+196 n^2+3)}{(16 n^4-8 n^2+1) (6144 n^8+4864 n^6+920 n^4+58 n^2+1)}.
\end{align*}
In spite of how complicated these formulas appear, the value of $H$ at the critical point is relatively simple in $n$: we get 
\begin{equation}\label{Hmindiag}
H(a_0,b_0,c_0,d_0,n) = \frac{-4096 n^{8}+3584 n^{6}+1008 n^{4}+68 n^{2}+1}{12288 n^{8}+9728 n^{6}+1840 n^{4}+116 n^{2}+2}.
\end{equation}
Replacing $n$ with $\sqrt{4+k}$ for some $k\ge 0$ (since $n\ge 2$ by assumption), this becomes 
$$ H(a_0,b_0,c_0,d_0,\sqrt{4+k}) 
= \frac{-4096 k^{4}-61952 k^{3}-349200 k^{2}-868412 k-802799}{12288 k^{4}+206336 k^{3}+1298224 k^{2}+3627508 k+3798226},
$$
and this is obviously negative for all real $k\ge 0$. 

We conclude that with $n\ge 2$, the choice $(a,b,c,d)=(a_0, b_0,c_0,d_0)$ leads to a variation $f$ given by \eqref{fdiag} such that $\phi=\{\psi, f\}$ satisfies the Misio{\l}ek criterion \eqref{misiolek_index}. Hence there is eventually a conjugate point along the geodesic.
\end{proof}

\begin{example}
In the case $m=3$ and $n=2$, the formula \eqref{foffdiag} with $a$ and $b$ given by \eqref{aboffdiag} becomes 
\begin{equation}\label{zeta32}
f(x,y) = \cos{x} \left(1-\frac{37}{1513} \, \cos{6x} +\frac{1}{17} \, \cos{4y}\right).
\end{equation}
This is plotted on the left side of Figure \ref{diagperturbfig} below.

On the other hand if $m=n=2$, the variation $f$ from \eqref{fdiag} becomes 
\begin{multline}\label{fdiag22}
f(x,y) = \cos{x} \left(1+\frac{139425}{1899113}\,\cos{4 y}-\frac{33231}{3798226} \,\cos{8 y}-\frac{66661217}{854600850} \, \cos{4 x} \right) \\
-\frac{19375654}{427300425}\,\sin{x} \sin{4 x}.
\end{multline}
The plot is shown on the right side of Figure \ref{diagperturbfig}. Note that especially in the off-diagonal case, both the formula and graph are substantially simpler than the one given in the second author's paper~\cite{preston2023conjugate}. Also note that the graphs are basically indistinguishable to the naked eye, although none of the terms in \eqref{fdiag22} can be omitted without changing the sign of the index. 
\end{example}
 
\begin{figure}[!ht]
\begin{center}
\includegraphics[width=0.4\linewidth]{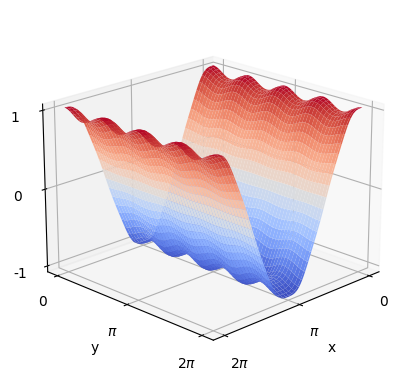}
\includegraphics[width=0.4\linewidth]{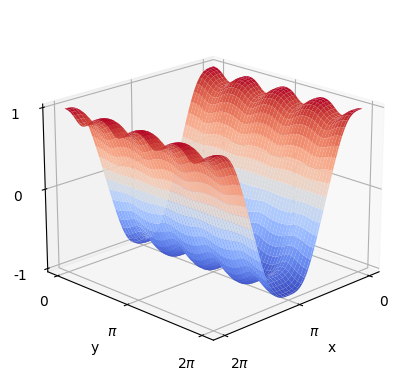}
\caption{On the left, the optimal perturbation $f$ of the form \eqref{foffdiag} in the case $m=3$ and $n=2$, given explicitly by \eqref{zeta32}. On the right, the optimal perturbation $f$ of the form \eqref{fdiag}, given explicitly by \eqref{fdiag22}, in the case $m=n=2$. These perturbations generate a variation $\phi$ satisfying the Misio{\l}ek criterion \eqref{misiolek_index} for conjugate points. Note that they appear quite similar at this level.}\label{diagperturbfig}
\end{center}
\end{figure}

Variation fields of the form considered in Theorems \ref{offdiagonal} and \ref{diagonal}, dominated as they are by the $\cos{x}$ term, do not seem suitable for making the Misio{\l}ek index negative in the case $m=n=1$. Instead the following field dominated by $\sin{x}$, found by Theodore Drivas (personal communication) and included here with his permission, gives an energy-reducing variation in that case.

\begin{theorem}\label{diagonal11thm}
Suppose $\psi(x,y) = -\cos{x}\cos{y}$. Then there is a function $f\colon \mathbb{T}^2\to\mathbb{R}$ such that $\phi=\{\psi,f\}$ satisfies \eqref{misiolek_index}. Thus there is a time $T>0$ such that $\gamma(T)$ is conjugate to $\gamma(0)$ along the geodesic $\gamma$, and $\gamma$ is not minimizing past $T$.
\end{theorem}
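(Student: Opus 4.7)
The strategy is to follow the same recipe as in Theorems \ref{offdiagonal} and \ref{diagonal}: exhibit an explicit parametric test function $f$, compute the Poisson bracket $\phi=\{\psi,f\}$ symbolically, expand $MI(\phi)$ using orthogonality of distinct Fourier modes on $\T^2$ into a positive-definite quadratic form $H$ in the free parameters, minimize by solving the linear system $\nabla H = 0$, and verify that the minimum value is strictly negative. The authors' remark that Drivas's test function is dominated by $\sin x$ rather than $\cos x$ identifies the essential new ingredient.

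A mode count motivates the switch. With $m=n=1$, the criterion \eqref{misiolek_index} asks for $\int_{\T^2}\bigl(|\grad\phi|^2 - 2\phi^2\bigr)\,d\mu < 0$, so a Fourier mode of $\phi$ with wavenumber pair $(j,k)$ contributes negatively iff $j^2+k^2 < 2$, i.e., only for the four modes $\sin x, \cos x, \sin y, \cos y$. Since $\psi_x = \sin x\cos y$ and $\psi_y = \cos x\sin y$, the trial $f=\sin x$ produces
$$\phi = \psi_x f_y - \psi_y f_x = -\cos^2 x\,\sin y = -\tfrac12\sin y - \tfrac12\cos(2x)\sin y,$$
delivering a favorable pure-$\sin y$ piece, whereas $f=\cos x$ would yield $\phi = \tfrac12\sin(2x)\sin y$, supported only on a mode with $j^2+k^2=5$. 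Drivas's $f$ should therefore take a form such as
$$f(x,y) = \sin x\bigl(1 + a\cos(2x) + b\cos(2y) + \cdots\bigr) + c\cos x\sin(2x) + \cdots,$$
with a few correction coefficients tuned to cancel the higher-mode leakage produced by the Poisson bracket.

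Given such an ansatz, the remaining work is mechanical: expand $\phi$, compute $|\grad\phi|^2$ and $\phi^2$, integrate term by term against the standard basis on $\T^2$ to obtain a quadratic $H(a,b,c,\ldots)$ with positive-definite leading part, solve $\nabla H = 0$ to locate the unique minimizer, and evaluate $H$ at that point. As in Theorem \ref{diagonal} this is most practically carried out in a symbolic algebra system. The main obstacle is the delicacy of the borderline case: the negative term in $MI$ carries only the small factor $m^2+n^2=2$, leaving very little margin, so with an insufficiently rich or badly chosen ansatz the quadratic $H$ will fail to be negative at its minimum. This is precisely why the authors rely on Drivas's specific construction rather than a minimal systematic one; once the correct form of $f$ is in hand, the final sign check is routine.
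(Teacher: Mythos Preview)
Your mode-count heuristic for why $\sin x$ is the right leading term is nice and correct, but what you have written is not a proof: it is an outline that stops precisely where the work begins. You propose a parametric ansatz with unspecified tail ``$\cdots$'', assert that the resulting $H$ is positive-definite quadratic, and then declare the minimization and sign check ``mechanical'' and ``routine'' without performing them. In the earlier theorems those steps were actually carried out and produced explicit critical points and explicit negative values; here you supply neither the coefficients nor the value of $MI$, so nothing has been verified.

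The paper's proof is in fact shorter and more direct than the procedure you sketch. There is no parametric family and no optimization step at all: one simply writes down
\[
f(x,y) = \sin x + \tfrac{1}{10}\sin(x+2y) - \tfrac{1}{20}\sin(3x) + \tfrac{1}{100}\sin(5x),
\]
computes $\phi=\{\psi,f\}$, and evaluates $MI = -\dfrac{3\pi^2}{200}$. Note also that this $f$ is not of the form your ansatz predicts: the term $\sin(x+2y)=\sin x\cos 2y + \cos x\sin 2y$ breaks the $y\mapsto -y$ symmetry, whereas your template $\sin x\,(1+a\cos 2x + b\cos 2y)+c\cos x\sin 2x$ is even in $y$. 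So even if you had carried the computation through, your stated ansatz would not reproduce Drivas's function, and you would need to check separately that a $y$-even truncation of comparable length actually achieves a negative index.
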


\begin{proof}
Define 
$$ f(x,y) = \sin{x} + \tfrac{1}{10} \sin{(x+2y)} - \tfrac{1}{20} \sin{(3x)} + \tfrac{1}{100} \sin{(5x)}.$$
We easily compute that the Poisson bracket $\phi=\{\psi,f\}$ is
\begin{multline*} \phi(x,y) = 
\tfrac{1}{5} \sin{x} \cos{y} \cos{(x+2y)}  \\ -\cos{x} \sin{y} \big( \cos{x} + \tfrac{1}{10} \cos{(x+2y)} - \tfrac{3}{20} \cos{(3x)} + \tfrac{1}{20} \cos{(5x)}\big). 
\end{multline*}

The Misio{\l}ek index \eqref{misiolek_index} is given by 
$$ MI = \iint_{\mathbb{T}^2} \lvert \nabla \phi\rvert^2 - 2\phi^2 \, dA = -\frac{3\pi^2}{200},$$
and we are done.
\end{proof}

\begin{figure}
    \centering
    \includegraphics[width=0.95\linewidth]{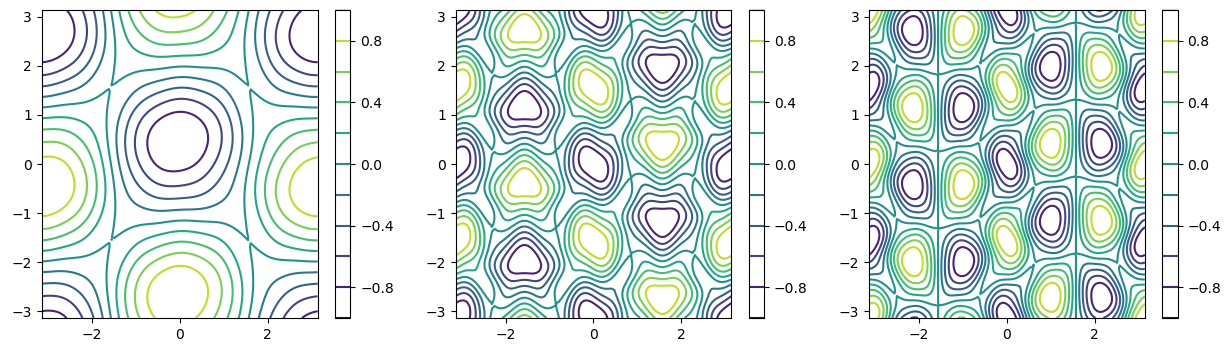}
    \caption{Local deformation of the stream function $\psi$ induced by the area-preserving diffeomorphism $\mathrm{sgrad}f$, i.e., the function $\psi(x-\epsilon \partial_yf, y+\epsilon\partial_xf)$, for $m=n=1$ (left), $m=n=2$ (middle) and $m=3$, $n=2$ (right).}
    \label{fig:stream-deformed}
\end{figure}

In Figure~\ref{fig:stream-deformed} we show the local deformation of the stream function $\psi$ induced by the area-preserving diffeomorphism generated by the test function $f$, in an example of each of the three cases discussed above. This can be compared to Figure~\ref{fig:streamlines} which shows $\psi$ without deformation. The second author~\cite{preston2023conjugate} showed that any test function solely supported in a single cell of a Kolmogorov flow on $\mathbb{T}^2$ cannot generate an energy-reducing variation, even if time variation is allowed. Hence one intuitively expects that the rectangular gridlines must be substantially deformed by any energy-reducing variation, which is indeed what we see. Unsurprisingly the most deformation happens at the vertices where gridlines intersect, since there the fluid has hyperbolic fixed points and the fluid is highly susceptible to small perturbations.


\section{The minimization principle}\label{minimization}

In this section we relate the Misiolek criterion to an eigenfunction problem on the Poisson bracket $\phi=\{\psi,f\}$. Recall that the Misio{\l}ek criterion  \eqref{misiolek_index} is given by 
\begin{equation}\label{misiolekindexagain}
MI(\phi) := \int_M \lvert \grad \phi\rvert^2 - \lambda^2 \phi^2 \, d\mu 
< 0, 
\end{equation}
where $\phi = \{\psi,f\}$ for some function $f$.

\begin{theorem}\label{eigenvaluetheorem}
If $\psi\colon M\to \mathbb{R}$ is a fixed smooth function satisfying $\Delta \psi = -\lambda^2 \psi$ and $L$ denotes the operator $L(f) = \{\psi,f\}$, then 
$\Delta^{-1}L$ is continuous from $\dot{H}^1(M)$ to itself. The image $\im[L]$ is a closed subspace of $\dot{H}^1(M)$, and $\Delta^{-1}$
restricts to a continuous operator on it. 
The Misio{\l}ek criterion is satisfied if and only if the operator norm of $\Delta^{-1}$ on $\im[L]$ satisfies
\begin{equation}
    \lVert \Delta^{-1}\big|_{\im[L]}\rVert_{\text{op}} \ge \frac{1}{\lambda^2}.
\end{equation}
This happens if and only if there is a number $c\ge 1/\lambda^2$ and a function $f\colon M\to\mathbb{R}$ satisfying 
$$ -\Delta^{-1} L(f) = cL(f). $$
\end{theorem}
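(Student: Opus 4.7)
The plan is to reformulate the Misio{\l}ek criterion as a Rayleigh-quotient problem on the closed subspace $V := \im[L]\subset\dot H^1(M)$, equipped with the natural inner product $\langle f,g\rangle_{\dot H^1} := \int_M \nabla f\cdot\nabla g\,d\mu$, and then recognize the resulting extremal quotient as an eigenvalue. The key identity, obtained by one integration by parts, is
$$\langle \Delta^{-1}\phi,\eta\rangle_{\dot H^1} = -\int_M \phi\,\eta\,d\mu;$$
setting $\eta=\phi$ gives $\langle -\Delta^{-1}\phi,\phi\rangle_{\dot H^1} = \|\phi\|_{L^2}^2$, so $-\Delta^{-1}$ is positive and self-adjoint on $\dot H^1$, and the Misio{\l}ek criterion \eqref{misiolekindexagain} rewrites as the condition that the Rayleigh quotient $\|\phi\|_{L^2}^2/\|\phi\|_{\dot H^1}^2$ exceed $1/\lambda^2$ for some $\phi\in V$.

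Next I would check the functional-analytic assertions. Continuity of $\Delta^{-1}L:\dot H^1\to\dot H^1$ is immediate since $L$ is bounded $\dot H^s\to\dot H^{s-1}$ (as $\psi$ is smooth) and $\Delta^{-1}$ gains two derivatives; combined with the Rellich embedding $\dot H^2\hookrightarrow\dot H^1$ this composition is in fact compact. Closedness of $V$ in $\dot H^1$ and the invariance $\Delta^{-1}(V)\subset V$ both follow from the $L^2$-antisymmetry $L^\ast=-L$ together with the structure afforded by $\Delta\psi=-\lambda^2\psi$: in the Fourier basis on $\mathbb{T}^2$ the operator $L$ shifts modes by $(\pm m,\pm n)$, so $\dot H^1$ decomposes into $L$-invariant subspaces indexed by orbits of this shift action on $\mathbb{Z}^2\setminus\{0\}$; since $\Delta^{-1}$ is diagonal in the same basis, both the closedness and the invariance questions reduce to orbit-by-orbit statements.

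With these structural facts in place, $-\Delta^{-1}|_V$ is a compact positive self-adjoint operator on the Hilbert space $(V,\langle\cdot,\cdot\rangle_{\dot H^1})$. The spectral theorem supplies an orthonormal eigenbasis, and the operator norm is attained:
$$\|\Delta^{-1}|_V\|_{\mathrm{op}} = \max_{\phi\in V\setminus\{0\}}\frac{\|\phi\|_{L^2}^2}{\|\phi\|_{\dot H^1}^2},$$
with the maximum achieved at an eigenvector $\phi = L(f)\in V$ satisfying $-\Delta^{-1}\phi = c\phi$ for $c=\|\Delta^{-1}|_V\|_{\mathrm{op}}$. The two asserted equivalences now follow: the Misio{\l}ek criterion holds for some $\phi\in V$ iff this Rayleigh-quotient supremum exceeds $1/\lambda^2$, iff $c\ge 1/\lambda^2$ (the boundary case is covered because the supremum is attained), iff there exist $c\ge 1/\lambda^2$ and $f$ with $-\Delta^{-1}L(f)=cL(f)$.

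The main obstacle is the invariance claim $\Delta^{-1}(V)\subset V$: direct computation shows $\Delta$ does not commute with $L=\{\psi,\cdot\}$, so invariance is not formal and must be extracted from the hypothesis $\Delta\psi=-\lambda^2\psi$. Making the Fourier-orbit decomposition precise—or equivalently identifying the orthogonal complement of $V$ with $\ker L$ via the antisymmetry $L^\ast = -L$ and checking that $\Delta^{-1}$ preserves both summands—is the technical crux; everything after that is standard spectral theory for compact positive self-adjoint operators.
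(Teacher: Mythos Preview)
Your overall strategy---rewrite $MI(\phi)$ as $\|\phi\|_{\dot H^1}^2 - \lambda^2\langle(-\Delta)^{-1}\phi,\phi\rangle_{\dot H^1}$, restrict to $V=\im[L]$, and invoke the spectral theorem for a compact positive self-adjoint operator---is exactly the paper's. The paper, however, does \emph{not} prove the invariance $\Delta^{-1}(V)\subset V$ that you flag as the crux: it simply passes from ``$(-\Delta)^{-1}$ is positive, compact, and self-adjoint on $\dot H^1$'' to ``hence also on the closed subspace $\im[L]$,'' which in effect replaces the restriction by the compression $P_V(-\Delta)^{-1}\big|_V$. So your instinct that something needs to be said here is sharper than the paper's proof; the trouble is that your proposed fixes do not work.

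The Fourier-orbit decomposition is correct as far as it goes---both $L$ and $\Delta^{-1}$ preserve each orbit of the $(\pm m,\pm n)$ shift---but this only localizes the problem: within a single orbit $\Delta^{-1}$ is diagonal while $\im[L]$ is not a coordinate subspace, so invariance is just as unclear there as globally. Your second route, identifying $V^\perp_{\dot H^1}$ with $\ker L$, is wrong: antisymmetry $L^\ast=-L$ is an $L^2$ statement, and in the $\dot H^1$ pairing one finds instead $V^\perp=\Delta^{-1}(\ker L)$. Invariance of the splitting would thus require $\Delta^{-1}(\ker L)\subset\ker L$, which fails already for $\psi=-\cos x\cos y$: the mean-zero part of $\psi^2$ lies in $\ker L$, but applying $\Delta^{-1}$ reweights its three Fourier components unequally and the result is no longer annihilated by $\{\psi,\cdot\}$. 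The clean repair is to drop the invariance claim and argue directly with the compression $P_V(-\Delta)^{-1}\big|_V$, which is compact, positive, and self-adjoint on $V$ with the same Rayleigh quotient; the eigenvalue equation in the theorem should then be read with an implicit $P_V$ in front.
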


\begin{proof}
Recall that the first-order homogeneous Sobolev metric is given by
$$\langle f,g\rangle_{\dot H^1}=-\int_M f\Delta g \,d\mu = \int_M \langle \nabla f,\nabla g\rangle \, d\mu.$$
To establish continuity it is sufficient to establish boundedness, which follows from 
$$ \lVert L(f)\rVert_{L^2} = \lVert \grad \psi\times \grad f\rVert_{L^2} \le \lVert \psi\rVert_{C^1} \lVert \grad f\rVert_{L^2} = 
\lVert \psi\rVert_{C^1} \lVert f\rVert_{\dot{H}^1}.$$
As a result the image 
of $L$ is a closed subspace in $\dot{H}^1(M)$. Since $L$ is antisymmetric in the $L^2$ inner product 
we see that the operator $\Delta^{-1}L$ is bounded and antisymmetric in $\dot{H}^1$, via 
\begin{multline*} 
\langle \Delta^{-1}L(f), g\rangle_{\dot{H}^1(M)} = -\int_M L(f)g\,d\mu = 
-\int_M \{\psi, f\} g\, d\mu \\
= \int_M \{\psi, g\} f \, d\mu = -\langle \Delta^{-1}L(g), f\rangle_{\dot{H}^1(M)}.
\end{multline*}


Now the Misio{\l}ek index from \eqref{misiolekindexagain} rescaled by the $\dot H_1$ norm is given by 
$$ \frac{MI(\phi)}{\lVert \phi\rVert^2_{\dot{H}^1(M)}} = 
1 -  \lambda^2  \frac{\int_M \phi^2 \, d\mu}{\int_M \lvert \grad \phi\rvert^2 d\mu} = 1 + \lambda^2 \, \frac{\langle \phi, \Delta^{-1}\phi\rangle_{\dot{H}^1(M)}}{\lVert \phi\rVert^2_{\dot{H}^1(M)}},$$
and this can be made negative for some $\phi\in \im[L]$ if and only if 
$$ \sup_{\phi\in \im[L]} \frac{\langle \phi, (-\Delta)^{-1}\phi\rangle_{\dot{H}^1(M)}}{\lVert \phi\rVert^2_{\dot{H}^1(M)}}
\ge \frac{1}{\lambda^2},$$
and the left-hand side is precisely the operator norm of $(-\Delta)^{-1}$ restricted to $\im[L]$. 
Since $(-\Delta)^{-1}$ is positive, compact, and self-adjoint on $\dot{H}^1(M)$, it is also positive, compact, and self-adjoint on the closed
subspace $\im[L]$. Hence it has a sequence of positive eigenvalues converging to zero, and the operator norm of it is the largest one. 
So the operator norm is larger than $1/\lambda^2$ if and only if there is a $c \geq 1/\lambda^2$ 
such that 
\begin{equation}\label{imageLeigenfunction}
(-\Delta)^{-1}L(f) = cL(f).
\end{equation}
\end{proof}


Solving the eigenfunction problem \eqref{imageLeigenfunction} is rather difficult since $L$ is quite far from invertible---its kernel consists of all functions constant on the level sets of $\psi$, and in particular any function $\Phi\circ \psi$ for $\Phi\colon \mathbb{R}\to\mathbb{R}$ will be in that kernel. In principle one could integrate the Green function for $\Delta^{-1}$ along the level sets to try to solve directly for $f$, or apply the operator $L^{-1}\Delta^{-1}L$ repeatedly to a Fourier basis in hopes that it converges to an eigenfunction, but computationally this becomes rather difficult. For example, inverting $L$ involves integrating around the level curves, but since there are always singular hyperbolic points where the gridlines cross, the local behavior can complicate things substantially. As such we take a more indirect approach in the next section, using higher-order Sobolev inner products.

\section{Numerics}\label{numerics}

Here we give some details on the numerics that helped provide the results of Theorems \ref{offdiagonal} and \ref{diagonal}, and explain the shape of the perturbations generating the variations that satisfy the Misiolek criterion. First we discuss the general setup for any surface $M$ possibly with boundary, then we specialize to $M=\mathbb{T}^2$. The implementation described in this section and used to generate Figures \ref{fig:perturb_wrt_p} and \ref{fig:perturb_unconstrained} is available on github: \url{https://github.com/alebrigant/conjugate-points}. 

\subsection{The minimization strategy}

Let $C_0^\infty(M)$ denote the space of $C^\infty$-functions $f:M\rightarrow\R$ that vanish on the boundary $\partial M$. We define the following operators on $C^\infty_0(M)$
\begin{equation}\label{previouslydefinedoperators} 
Lf:=\{\psi, f\}, \quad \Lambda f:=-\Delta f, \quad \Omega f:=-L(\Lambda-\lambda^2 I)Lf.
\end{equation}
The goal is to find a function $f\in C_0^\infty$ such that the corresponding Poisson bracket $\phi=\{\psi,f\}$ minimizes some normalized version of the Misio{\l}ek index \eqref{misiolekindexagain}. This index can be rewritten in terms of the previously defined operators \eqref{previouslydefinedoperators} as
\begin{align*}
MI(\phi)&=-\int_M\phi(\Delta+\lambda^2I)\phi \,d\mu=\int_M Lf(\Lambda-\lambda^2 I)Lf \,d\mu=\langle f, \Omega f\rangle_{L^2},
\end{align*}
where we have used the antisymmetry of the operator $L$ with respect to the $L^2$ inner product on $C_0^\infty(M)$. To minimize $MI(\phi)$ we need to constrain $f$; otherwise the minimum is either zero or negative infinity. The easiest way to do this is to require that some norm of $f$ be constrained to be 1, which is equivalent to choosing $f$ so that it minimizes the Rayleigh quotient
$$RQ(f)=\frac{\langle f, \Omega f\rangle_{L^2}}{\langle f, \Gamma f\rangle_{L^2}},$$
where $\Gamma$ is some positive-definite symmetric operator on $C_0^\infty(M)$. If $\Gamma$ has some relatively high Sobolev order, the minimizer we find will be smoother. By the usual calculus of variations method, minimizers of this must satisfy
$$\Omega f=c\Gamma f \quad\text{for some }c\in\R.$$
Equivalently, $f$ is an eigenfunction of the operator $\Gamma^{-1}\Omega$, and if we can find an eigenvalue $c$ which is negative, it proves the index form can be made negative, and thus that there is eventually a conjugate point along the geodesic. The idea is therefore to make $\Gamma$ strong enough as a differential operator that $\Gamma^{-1}$ more than compensates all the differential operators in $\Omega$. Choosing $\Gamma=\Lambda^p$ is equivalent to changing the inner product to the homogeneous Sobolev metric of order $p$
$$\langle f,g\rangle_{\dot H^p}=(-1)^p\int_M f\Delta^pg \,d\mu = \langle f, \Lambda^pg\rangle_{L^2}$$
so that the Rayleigh quotient becomes
\begin{equation}\label{rayleigh}
RQ(f)=\frac{\langle f, \Omega_p f\rangle_{\dot H^p}}{\langle f, f\rangle_{\dot H^p}},
\end{equation}
where 
$$\Omega_p=\Lambda^{-p}\Omega=-\Lambda^{-p}L(\Lambda-\lambda^2 I)L.$$ 
Replacing the $L^2$-inner product by the $\dot H^p$ inner product yields an operator $\Omega_p$ that is bounded for $p\geq 2$, and compact for $p\geq 3$. 

\begin{figure}
\includegraphics[width=0.24\linewidth]{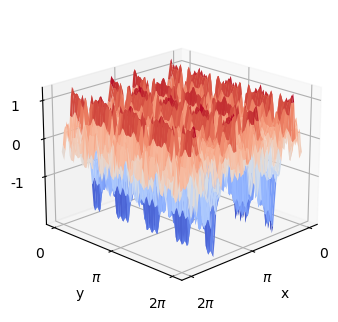}
\includegraphics[width=0.24\linewidth]{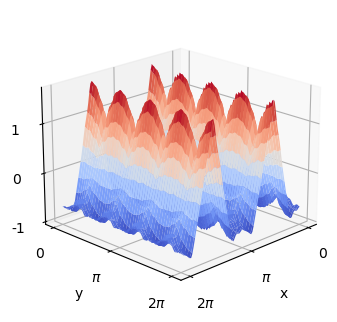}
\includegraphics[width=0.24\linewidth]{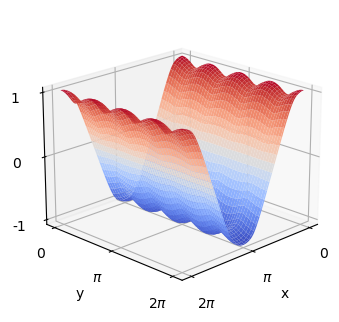}
\includegraphics[width=0.24\linewidth]{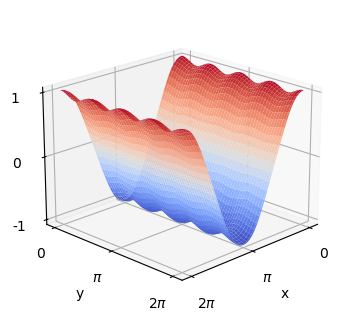}\\
\includegraphics[width=0.24\linewidth]{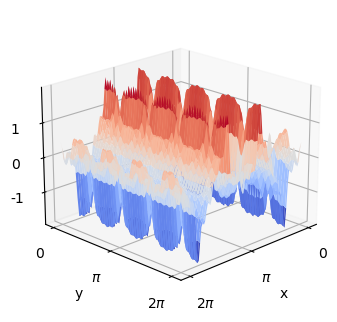}
\includegraphics[width=0.24\linewidth]{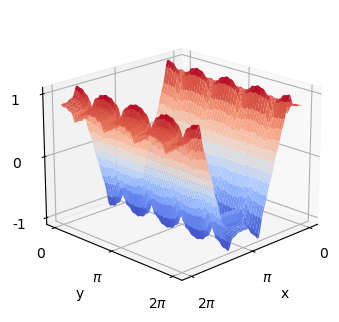}
\includegraphics[width=0.24\linewidth]{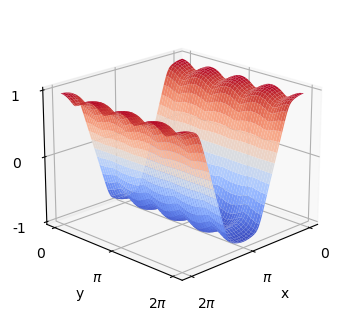}
\includegraphics[width=0.24\linewidth]{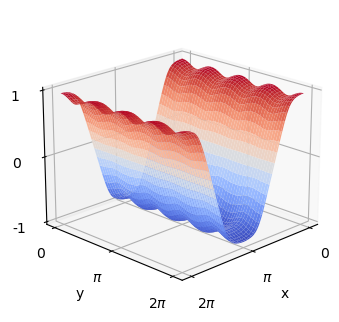}
\caption{On the top row, minimizers $f:M\rightarrow \R$ of \eqref{rayleigh} in the case $m=3$ and $n=2$ and for different values of the order $p$ of the homogeneous Sobolev norm: $0, 1, 2, 3$ from left to right. On the bottom row, minimizers in the case $m=2$ and $n=2$ for the same values of $p$.}
\label{fig:perturb_wrt_p}
\end{figure}

The problem of minimizing the Rayleigh quotient \eqref{rayleigh} can be made finite-dimensional by decomposing the unknown $f$ in the following spatial Fourier basis
\begin{equation}\label{fourier}
f(x,y)=\sum_{j=0}^N\sum_{k=-N}^N a_{jk}\cos(jx+ky).
\end{equation}
Due to the evenness of the cosine function, it is enough to take only positive $k$ indices for $j=0$ in the above double sum. Then each function $f$ can be represented by a $(N(2N+2)+1)$-size vector of real coordinates $(a_{jk})_{j,k}$, and the operator $\Omega_p$ by a square matrix of same size. The matrix representation of $\Omega_p$ is obtained by multiplying matrices that are all diagonal except for the Poisson bracket operator $L$ matrix (for which we give a formula in the Appendix, for the aid of the reader). Then the problem of minimizing \eqref{rayleigh} boils down to finding the minimal eigenvalue  of the $\Omega_p$ matrix, and the function $f$ whose Poisson bracket minimizes the Misio{\l}ek criterion is given by the corresponding eigenvector. 

\subsection{The minimizing deformations}

Figure~\ref{fig:perturb_wrt_p} shows the minimizers found in the case $m=3$ and $n=2$ (top row) and the case $m=2$ and $n=2$ (bottom row), for different values of the Sobolev order $p$. The deformations $g(t,x,y)=\sin(\frac{\pi t}{T})f(x,y)$ associated to all the spatial deformations $f$ shown in this figure induce vector fields $Y=\sgrad g$ that make the Misio{\l}ek index, and thus the index form, negative. However the ``shape'' of this minimizing deformation stabilizes only for $p\geq 2$, and for these values we see a characteristic ``V-shape'' appearing, i.e., a perturbation of $\cos(x)$, which we used to obtain formulas \eqref{foffdiag} and \eqref{fdiag}.

\begin{figure}
\includegraphics[width=0.24\linewidth]{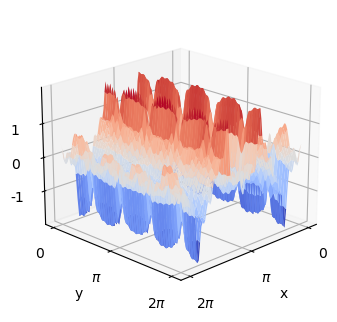}
\includegraphics[width=0.24\linewidth]{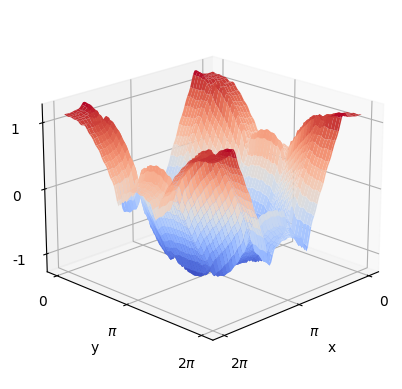}
\includegraphics[width=0.24\linewidth]{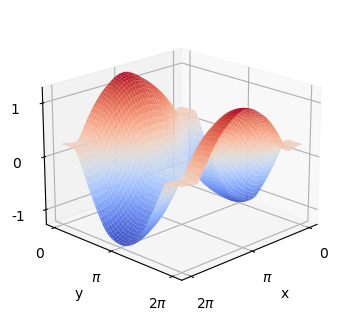}
\includegraphics[width=0.24\linewidth]{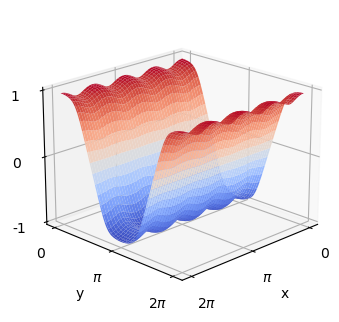}
\caption{Minimizers $f:M\rightarrow \R$ of \eqref{rayleigh} in the case $m=2$ and $n=2$ and for different values of the order $p$ of the homogeneous Sobolev norm: $0, 1, 2, 3$ from left to right.}
\label{fig:perturb_unconstrained}
\end{figure}

In the diagonal case $m=n$, due to the symmetry in the variables $x$ and $y$, another minimizer is given by a perturbation of $\cos(y)$, the symmetric perturbation obtained by exchanging $x$ and $y$. To obtain the figures of the bottom row of Figure~\ref{fig:perturb_wrt_p}, we constrained the solution to have Fourier coefficient $a_{01}=0$. Without this constraint, we find the results of Figure~\ref{fig:perturb_unconstrained}, where both minimizers appear separately as well as mixed, as in the case $p=1$. This is similar to the minimizer found by the second author in \cite{preston2023conjugate}, displayed in Figure 7.

The reason for the dominance of the term $\cos{x}$ in the functions appearing in Theorem \ref{offdiagonal}--\ref{diagonal} is that this is the lowest-eigenvalue eigenfunction of $\Lambda$. Roughly speaking, we first compute 
$$ L(\cos{x}) = n\sin{x}\cos{mx} \sin{ny},$$
then obtain
$$ (\Lambda-\lambda^2I)L(\cos{x}) = 2mn\cos{x}\sin{mx}\sin{ny}$$
plus a term with a smaller coefficient, and finally
$$ L(\Lambda-\lambda^2I)L(\cos{x}) 
= -\frac{n^2}{4} \cos{x} + S,$$
where $S$ is a linear combination of terms of the form $\cos(jx+ky)$, which are eigenfunctions of $\Lambda^{-p}$ for the eigenvalues $(j^2+k^2)^{-p}$, for integers $j$ and $k$. Therefore, after applying the inverse Laplacian $\Lambda^{-p}$ for a high power $p$, the correction terms in $S$ all become small as $p$ increases while the $\cos{x}$ term is preserved.
This explains the dominance of the V-shape: although $L(\cos{x})$ does not involve another $\cos{x}$ term, $L^2(\cos{x})$ does, and it becomes dominant after applying smoothing operators.

\section{Other open questions}\label{11sec}

We conclude with some possible other directions to pursue these techniques. An obvious one is to apply this to other steady flows on the torus that come from Laplacian eigenfunctions; the Kolmogorov flows we have discussed have rectangular cells, but one could also consider stream functions of the form
$$ \psi = \cos{(mx+ny)} + c \cos{(mx-ny)} $$
which for $c\ne 1$ have skew quadrilateral cells. One could also consider stretched tori where the eigenfunctions look like $\cos{(mx+\alpha ny)}$ for some positive parameter $\alpha$, as considered in \cite{drivas2021conjugate}.

We have seen that whenever the Kolmogorov eigenvalue is strictly larger than the minimal one $\lambda=1$, the Misio{\l}ek index can be made negative. When $\lambda=1$, the question is more delicate. There are some simple eigenfunctions of the Laplacian such as the shear flows generated by $\sin{x}$, $\cos{x}$, $\sin{y}$, and $\cos{y}$ where the fluid flows have nonpositive curvature along them, and therefore no conjugate points by Misio{\l}ek~\cite{misiolekstability}. However there are also stream functions like $\sin{x}+\sin{y}$ which are not shear flows and have nontrivial cells. The Misio{\l}ek index must clearly be nonnegative in this case, but there may still be conjugate points detectable by methods such as in \cite{preston2023conjugate}.

It is also easy to see how the same techniques could generate conjugate points on other surfaces. It would be interesting to try the same methods to find conjugate points along other eigenfunctions of the $2$-sphere: the smallest-eigenvalue function generates rigid rotations and its geodesic is known to have many conjugate points~\cite{misiolekstability}. It seems likely that every eigenfunction of the Laplacian on $S^2$ generates a geodesic with conjugate points, but no others are actually known concretely. This is connected with work of Benn~\cite{benn2021conjugate} on conjugate points along \emph{nonsteady} Rossby-Haurwitz waves on the sphere.

\section*{Appendix}

\begin{proposition}
    Let $f:M\rightarrow M$ be a function and $a_{jk}$ the coefficients of its Fourier decomposition \eqref{fourier}, for $(j,k)\in \{0\}\times\{0,\hdots,N\}\cup \{1,\hdots,N\}\times\{-N,\hdots,N\}$. Then
    \begin{align*}
    Lf=\{\psi,f\}=\frac{1}{8}\sum_{j=-\infty}^\infty\sum_{k=-\infty}^\infty\big(&(mk-nj)(A_{j-m,k-n}-A_{j+m,k+n})\\
    +&(mk+nj)(A_{j-m,k+n}-A_{j+m,k-n})\big)\cos(jx+ky)
    \end{align*}
    where
    $$A_{jk}=\begin{cases}
    a_{jk}\,\,&\text{if}\quad 0<j\leq N, \, |k|\leq N,\\
    a_{-j,-k}\,\,&\text{if}\quad -N\leq j<0, \, |k|\leq N,\\
    a_{j|k|} \,\, &\text{if}\quad j=0, \, |k|\leq N,\\
    0\,\, &\text{otherwise.}
    \end{cases}
    $$
\end{proposition}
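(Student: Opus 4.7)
The plan is to verify the formula by direct trigonometric computation followed by a reindexing argument. First I would compute
\[
\partial_x\psi = m\sin(mx)\cos(ny), \quad \partial_y\psi = n\cos(mx)\sin(ny),
\]
together with $\partial_x f = -\sum_{j,k} j\,a_{jk}\sin(jx+ky)$ and $\partial_y f = -\sum_{j,k} k\,a_{jk}\sin(jx+ky)$ from \eqref{fourier}. Substituting into $\{\psi,f\} = \psi_x f_y - \psi_y f_x$ gives
\[
\{\psi,f\} = \sum_{j,k} a_{jk}\sin(jx+ky)\bigl[nj\cos(mx)\sin(ny) - mk\sin(mx)\cos(ny)\bigr].
\]

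The next step is to reduce the triple trigonometric product to pure cosines via product-to-sum identities. Using $\sin A\cos B = \tfrac12[\sin(A+B)+\sin(A-B)]$ and $\cos A\sin B = \tfrac12[\sin(A+B)-\sin(A-B)]$, the bracket rewrites as $\tfrac12[(nj-mk)\sin(mx+ny) - (nj+mk)\sin(mx-ny)]$. Multiplying by $\sin(jx+ky)$ and applying $\sin A\sin B = \tfrac12[\cos(A-B)-\cos(A+B)]$ then yields four cosine terms of the form $\cos((j\pm m)x + (k\pm n)y)$, each weighted by $\tfrac14$. I would then reindex each of the four sums by the substitution $(j,k)\mapsto(j\mp m, k\mp n)$ so that every cosine takes the canonical form $\cos(jx+ky)$, with coefficients involving the shifted $a_{j\pm m, k\pm n}$. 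Collecting the four contributions gives the bracketed expression of the statement, but with $a$'s in place of $A$'s and an overall prefactor of $\tfrac14$ rather than $\tfrac18$.

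The remaining task is to reconcile these two discrepancies. After shifting, the indices $(j\pm m, k\pm n)$ may fall outside the original truncated range; because $\cos(jx+ky) = \cos(-jx-ky)$, a contribution at such an index is really a contribution to the basis element with indices reflected through the origin, so the coefficient must be accessed via that reflection. The piecewise definition of $A_{jk}$ encodes exactly this extension by symmetry (including the boundary convention $A_{0,k} = a_{0,|k|}$, which reflects that $\cos(ky)$ appears only once in the basis). Finally, a short inspection shows the bracketed coefficient in the statement is invariant under $(j,k)\mapsto(-j,-k)$, so extending the summation from a half-plane to all of $\mathbb{Z}^2$ double-counts each basis element, producing the missing factor of $\tfrac12$ and the stated prefactor $\tfrac18$. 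The main obstacle is purely combinatorial bookkeeping in the reindexing, particularly in the boundary cases at $j=0$; no analytic difficulty arises once the symmetries are matched up correctly.
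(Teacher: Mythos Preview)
Your approach is correct and is precisely the natural one: expand the triple products into cosines, reindex, and then use the evenness $A_{-j,-k}=A_{jk}$ to pass from the half-plane index set to all of $\mathbb{Z}^2$, picking up the extra factor of $\tfrac12$. The paper itself offers no proof of this proposition---it is stated without argument in the Appendix as a computational aid for building the matrix of $L$ in the Fourier basis---so there is nothing to compare against beyond noting that your direct trigonometric computation is exactly what the authors would have done to obtain the formula.

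One small expository point worth tightening if you write this out in full: after you reindex each of the four cosine terms, the four summation ranges are \emph{shifted} copies of the original half-plane, not the half-plane itself, so the ``collected'' expression with $a$'s and prefactor $\tfrac14$ is not literally a single sum over a common index set. The clean way to handle this is to do the symmetric extension \emph{first}: write $f=\tfrac12\sum_{(j,k)\in\mathbb{Z}^2}A_{jk}\cos(jx+ky)$ (modulo the constant term, which the Poisson bracket annihilates anyway), carry out your product-to-sum expansion over the full lattice where all four reindexings are bijections of $\mathbb{Z}^2$, and read off the $\tfrac18$ directly. This avoids the slightly delicate ``extend then double-count'' step at the end and makes the $j=0$ boundary case automatic. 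But this is a matter of presentation; the substance of your argument is sound.
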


\bibliographystyle{abbrv}
\bibliography{bibliography}

\begin{thebibliography}{10}

\bibitem{arnold2014differential}
V.~I. Arnold.
\newblock On the differential geometry of infinite-dimensional {L}ie groups and
  its application to the hydrodynamics of perfect fluids.
\newblock {\em Vladimir I. Arnold-Collected Works: Hydrodynamics, Bifurcation
  Theory, and Algebraic Geometry 1965-1972}, pages 33--69, 2014.

\bibitem{MR4268535}
V.~I. Arnold and B.~A. Khesin.
\newblock {\em Topological methods in hydrodynamics}, volume 125 of {\em
  Applied Mathematical Sciences}.
\newblock Springer, Cham, 2021.
\newblock Second edition.

\bibitem{benn2021conjugate}
J.~Benn.
\newblock Conjugate points in $\mathcal{D}_{\mu}({S}^2)$.
\newblock {\em Journal of Geometry and Physics}, 170:104369, 2021.

\bibitem{drivas2021conjugate}
T.~D. Drivas, G.~Misio{\l}ek, B.~Shi, and T.~Yoneda.
\newblock Conjugate and cut points in ideal fluid motion.
\newblock {\em Annales math{\'e}matiques du Qu{\'e}bec}, pages 1--19, 2021.

\bibitem{ebin1970groups}
D.~G. Ebin and J.~Marsden.
\newblock Groups of diffeomorphisms and the motion of an incompressible fluid.
\newblock {\em Annals of Mathematics}, pages 102--163, 1970.

\bibitem{ebinmisiolekpreston}
D.~G. Ebin, G.~Misio{\l}ek, and S.~C. Preston.
\newblock Singularities of the exponential map on the volume-preserving
  diffeomorphism group.
\newblock {\em Geometric and Functional Analysis}, 16(4):850--868, 2006.

\bibitem{Lan2012TheMO}
R.~Lan.
\newblock The minimizer of the {D}irichlet integral.
\newblock {M}{S}c, Concordia University, 2012.

\bibitem{lichtenfelz2022existence}
L.~Lichtenfelz, T.~Tauchi, and T.~Yoneda.
\newblock Existence of a conjugate point in the incompressible {E}uler flow on
  a three-dimensional ellipsoid.
\newblock {\em arXiv preprint arXiv:2204.00732}, 2022.

\bibitem{misiolekstability}
G.~Misio{\l}ek.
\newblock Stability of flows of ideal fluids and the geometry of the group of
  diffeomorphisms.
\newblock {\em Indiana University mathematics journal}, pages 215--235, 1993.

\bibitem{misiolekconjugate}
G.~Misio{\l}ek.
\newblock Conjugate points in $\mathcal{D}_{\mu}(\mathbb{T}^2)$.
\newblock {\em Proceedings of the American Mathematical Society}, pages
  977--982, 1996.

\bibitem{misioek2010fredholm}
G.~Misio{\l}ek and S.~C. Preston.
\newblock Fredholm properties of {R}iemannian exponential maps on
  diffeomorphism groups.
\newblock {\em Inventiones mathematicae}, 179(1):191, 2010.

\bibitem{prestonfirst}
S.~C. Preston.
\newblock On the volumorphism group, the first conjugate point is always the
  hardest.
\newblock {\em Communications in mathematical physics}, 267:493--513, 2006.

\bibitem{preston2008wkb}
S.~C. Preston.
\newblock The {WKB} method for conjugate points in the volumorphism group.
\newblock {\em Indiana University Mathematics Journal}, pages 3303--3327, 2008.

\bibitem{preston2023conjugate}
S.~C. Preston.
\newblock Conjugate point criteria on the area-preserving diffeomorphism group.
\newblock {\em Journal of Geometry and Physics}, 183:104680, 2023.

\bibitem{preston2017geometry}
S.~C. Preston and P.~Washabaugh.
\newblock The geometry of axisymmetric ideal fluid flows with swirl.
\newblock {\em Arnold Mathematical Journal}, 3:175--185, 2017.

\bibitem{tauchi2022arnold}
T.~Tauchi and T.~Yoneda.
\newblock Arnold stability and {M}isio{\l}ek curvature.
\newblock {\em Monatshefte f{\"u}r Mathematik}, 199(2):411--429, 2022.

\bibitem{tauchi2022existence}
T.~Tauchi and T.~Yoneda.
\newblock Existence of a conjugate point in the incompressible {E}uler flow on
  an ellipsoid.
\newblock {\em Journal of the Mathematical Society of Japan}, 74(2):629--653,
  2022.

\end{thebibliography}

\end{document}